\newcommand{\sphere}{\mathbb{S}}
\newcommand{\F}{\mathcal{F}}
\newcommand{\R}{\mathbb{R}}
\newcommand{\N}{\mathbb{N}}
\newcommand{\E}{\mathbb{E}}
\newcommand{\dY}{\Delta Y}
\newcommand{\e}{\varepsilon}
\newcommand{\LL}{{L}}
\newcommand{\Lt}{{\LL^2}}
\newcommand{\Rd}{{\R^d}}
\newcommand{\Rn}{{\R^n}}
\newcommand{\C}{\mathbb{C}}
\newcommand{\M}{{\mathcal{M}}}
\newcommand{\I}{{\mathbb{I}}}
\newcommand{\Lp}{{\LL^p}}
\newcommand{\Ao}{{A}}
\newcommand{\At}{{B}}
\newcommand{\mB}{{W}}
\newcommand{\wt}{\widetilde}
\newtheorem{theorem}{Theorem}
\DeclareMathOperator{\sgn}{sgn}
\begin{document}
\thispagestyle{plain}
\sloppy

\title{Parabolic martingales and non-symmetric Fourier multipliers}
\author{Krzysztof Bogdan\thanks{Corresponding author: Institute of Mathematics of the
Polish Academy of Sciences, and
Institute of Mathematics,
 Wroc{\l}aw University of Technology,
50-370 Wroc{\l}aw, Poland, bogdan@im.pwr.wroc.pl. Supported in part by grant MNiSW N N201 397137.}\and  {\L}ukasz Wojciechowski\thanks{Mathematical Institute, University of Wroc{\l}aw, 50-384 Wroc{\l}aw, Poland, luwoj@math.uni.wroc.pl. }
} \maketitle

\begin{abstract}
We give a class of Fourier multipliers with non-symmetric symbols and explicit norm bounds on $L^p$ spaces by using the stochastic calculus of L\'evy processes and Burkholder-Wang estimates for differentially subordinate martingales. 
\end{abstract}
\footnotetext{2010 {\it MS Classification}: 42B15, 60G15, 60G46.\\
{\it Key words and phrases}: non-symmetric Fourier multiplier, martingale transform.}
\section{Introduction and main result}
For each function $m:\,\Rd\to \C$ of absolute value bounded by $1$, there is a unique 
linear contraction $M$ on $\Lt(\Rd) $ defined in terms of the Fourier
transform by
\begin{equation}
  \label{eq:dmf}
\widehat{M f}=m\hat f\,,
\end{equation}
or, in terms of bilinear forms and Plancherel theorem, by
\begin{equation}
   \label{eq:dmf2}
\Lambda(f,g)=\int_{\Rd}{M f(x) g(x) dx}= (2 \pi)^{-d}\int_{\Rd} m(\xi) \widehat{f}(\xi)\widehat{g}(-\xi) d\xi\,.
\end{equation}
We are interested in {\it symbols} $m$ for which the {\it Fourier multiplier} $M$ has a finite operator norm $\|M\|_p$
on $L^p(\Rd)$ for all $p\in (1,\infty)$:
\begin{equation}\label{szac:mn}
|\Lambda (f,g)|\leq \|M\|_p\|f\|_p\|g\|_q,
\end{equation}
where $q=p/(p-1)$ and, say, $f,g \in C_c^{\infty}(\Rd).$
Motivated by \cite{banuelos-mendez, NV}, a wide class of multipliers was recently studied in \cite{MR2345912, 2011RBKBAB} by transforming the so-called parabolic martingales of L\'evy process. Burkholder-Wang inequalities for differentially subordinate
 martingales (\cite{MR1334160}) were used to bound their norms:
\begin{equation}
  \label{eq:psm1}
\|M\|_p\le \max\{p-1, \frac{1}{p-1}\}=:p^*-1.
\end{equation}
Surprisingly, the symbols $m$ obtained in \cite{MR2345912, 2011RBKBAB} turned out to be symmetric, even when non-symmetric L\'evy processes were used in the construction. In this paper we propose a new approach which leads to {\it non-symmetric} symbols. Namely
we use two different
L\'evy processes to drive the martingales defining the pairing $\Lambda$. Compared to  \cite{MR2345912, 2011RBKBAB} we also slightly modify the calculations of the Fourier symbol.

Let $d,n\in \N$ and consider the general L\'evy-Khinchine exponent on $\R^n$,
\begin{eqnarray}\label{eq:l-c}
\Psi(\zeta) = \int_{\R^n} \left(e^{i(\zeta,z)} -1 - i(\zeta,z) 1_{|z| \leq 1}\right)\nu(dz) - \frac{1}{2}\int_{\sphere}\left( \zeta, \theta \right)^2 \mu(d\theta)+ i(\zeta, \gamma),
\end{eqnarray}
where $\zeta,\gamma \in \R^n$, $\mu\geq 0$ is a (non-unique) finite 
measure on the unit sphere $\sphere\subset \R^n$, and $\nu\geq 0$ is a (unique) L\'evy measure on $\Rn$: 
$ \nu(\{0\})=0$ and
\begin{equation}
  \label{eq:clm}
\int_\Rn \min(|z|^2,1) \nu(dz)<\infty\nonumber.
\end{equation}
Here $(\xi,\eta)=\sum_{k} \xi_k\eta_k$ and  $|\xi|^2=\sum_{k} |\xi_k|^2=(\xi,\overline{\xi})$ for $\xi,\eta \in \R^d$, $\R^n$, $\C^d$, $\C^n$. 
Consider 
complex-valued functions $\phi$ on
$\Rn$ and $\varphi$ on $\sphere$ such that $\|\phi\|_\infty\leq 1$ and $\|\varphi\|_\infty\leq 1$. For $\zeta\in \R^n$ we let
\begin{eqnarray}\label{eq:l-ct}
\wt\Psi(\zeta) = \int_{\R^n} \left(e^{i(\zeta,z)} -1 - i(\zeta,z) 1_{|z| \leq 1}\right)\phi(z)\nu(dz) - \frac{1}{2}\int_{\sphere}\left( \zeta, \theta \right)^2 \varphi(\theta)\mu(d\theta).
\end{eqnarray}
Let $A ,B \in \R^{d \times n}$. For $\xi\in \Rd$ we define
\begin{align} \label{mn:gen3}
&m(\xi)=\bigg[e^{\Psi(B^T\xi - A^T\xi)} - e^{\Psi(\At ^T \xi) +\Psi(-\Ao ^T \xi)} \bigg]\times\\
&
\frac{
        \int\limits_\Rd \left(e^{i(B^T\xi,z)}-1 \right)\left(e^{i(-A^T\xi,z)}-1 \right)\phi\left(z\right)\nu(dz) 
        -
             \int\limits_{\sphere} \left( B^T\xi , \theta \right)\left( -A^T\xi ,\theta \right) \varphi\left( \theta \right) \mu\left(d\theta \right)
          }
     {
    \int\limits_\Rd \left(e^{i(B^T\xi,z)}-1 \right)\left(e^{i(-A^T\xi,z)}-1 \right)\nu(dz) 
        -
             \int\limits_{\sphere} \left( B^T\xi , \theta \right) \left( -A^T\xi ,  \theta \right) \mu\left(d\theta \right)
     }  \nonumber ,
\end{align} 
with the convention that
\begin{align} \label{mn:gen4}
&m(\xi)=\e^{\Psi(\At ^T \xi) +\Psi(-\Ao ^T \xi)}\times\\
& \big(\int\limits_\Rd \!\left(e^{i(B^T\xi,z)}-1 \right)\left(e^{i(-A^T\xi,z)}-1\right)\phi\left(z\right)\nu(dz) 
        \!-\!
\int\limits_{\sphere} \! \left( B^T\xi , \theta \right)\left( -A^T\xi ,\theta \right) \varphi\left( \theta \right) \mu(d\theta)
\big)\nonumber,
\end{align} 
if the denominator in \eqref{mn:gen3} is zero. 
To simplify \eqref{mn:gen3} and \eqref{mn:gen4}, we note that
\begin{align}
&\int_{\R^n} 
\left(e^{i(\zeta_1,z)} -1\right)\left(e^{i(\zeta_2,z)} - 1\right)\phi(z)\nu(dz) 
- \int_{\sphere}\left( \zeta_1, \theta \right)\left(\zeta_2,\theta\right) \varphi(\theta)\mu(d\theta)\nonumber\\
&=\wt\Psi(\zeta_1+\zeta_2) -
\wt\Psi(\zeta_1)-\wt\Psi(\zeta_2), \quad \zeta_1, \zeta_2\in \R^n, \label{eq:l-ctu}
\end{align}
and a similar identity holds for the special case of $\Psi$. Thus, $m(\xi)$ equals
\begin{align}\label{mn:genca}
\left[\!e^{\Psi(B^T\xi-A^T\xi)}\!-\!e^{\Psi(B^T\xi)+\Psi(-A^T\xi)}\!\right]
\frac{\!\wt \Psi(B^T\xi- A^T\xi)\!-\!\wt \Psi(B^T\xi)\!-\!\wt \Psi(-A^T\xi)\!}
{\!\Psi(B^T\xi- A^T\xi)\!-\!\Psi(B^T\xi)\!-\!\Psi(-A^T\xi)\!},
\end{align}
with the convention that
\begin{align}\label{mn:genca0}
m(\xi)&=\!e^{\Psi(B^T\xi)+\Psi(-A^T\xi)}
\left[{\!\wt \Psi(B^T\xi- A^T\xi)\!-\!\wt \Psi(B^T\xi)\!-\!\wt \Psi(-A^T\xi)\!}\right],
\end{align}
if the denominator in \eqref{mn:genca} is zero.
In short,
\begin{align}\label{mn:genc}
m(\xi)=&e^{\Psi(B^T\xi)+\Psi(-A^T\xi)}\;\left[{\wt \Psi(B^T\xi- A^T\xi)-\wt \Psi(B^T\xi)-\wt \Psi(-A^T\xi)}\right]\\
&\times q\left(\Psi(B^T\xi -A^T\xi)-\Psi(B^T\xi)-\Psi(-A^T\xi)\right),
\nonumber
\end{align}
where
$$q(z)=(e^z-1)/z\;\quad \mbox{ if $z\in \C\setminus \{0\}$, } \qquad \mbox{ and }\quad q(0)=1\,.$$
We see that (\ref{mn:gen3}, \ref{mn:gen4}) are equivalent to \eqref{mn:genc}.
Here is our main result.
\begin{theorem}\label{th:bfm}
If $\M$ satisfies
{\rm (\ref{eq:dmf})} and {\rm (\ref{mn:genc})}, and $1<p<\infty$, then $\|\M \|_p\leq p^*-1$.
\end{theorem}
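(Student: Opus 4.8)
The plan is to represent the bilinear form $\Lambda$ of \eqref{eq:dmf2} as the expectation of a product of two martingales driven by the L\'evy process $Z$ on $\Rn$ with exponent $\Psi$: one of them a martingale transform of a parabolic martingale built from $f$, the other a parabolic martingale built from $g$. Differential subordination of the transform and the Burkholder--Wang inequality \cite{MR1334160} then give the bound, exactly as in \cite{MR2345912, 2011RBKBAB}; the only new ingredients are the use of two matrices $A,B$ (equivalently, of the two distinct pushed-forward L\'evy processes $AZ$ and $BZ$ driven by the common noise $Z$), which is what makes the symbol non-symmetric, and a correspondingly modified symbol computation.

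Fix $f,g\in C_c^\infty(\Rd)$. Let $Z=(Z_s)_{0\le s\le1}$ have exponent $\Psi$, put $Y_s=x+BZ_s$ and $X_s=x+AZ_s$ with the common starting point $x$ distributed according to the ($\sigma$-finite, translation-invariant) Lebesgue measure on $\Rd$, and let $(P^B_t)$ and $(P^A_t)$ be the convolution semigroups of $BZ$ and $AZ$ (with Fourier symbols $e^{t\Psi(B^T\xi)}$ and $e^{t\Psi(A^T\xi)}$). Set $F_s=(P^B_{1-s}f)(Y_s)$ and $G_s=(P^A_{1-s}g)(X_s)$: these are martingales on $[0,1]$ for the filtration of $Z$, with $F_1=f(Y_1)$ and $G_1=g(X_1)$, whence $\|F_1\|_p=\|f\|_p$ and $\|G_1\|_q=\|g\|_q$ because Lebesgue measure is invariant under $BZ$ and $AZ$. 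Finally let $N=(N_s)_{0\le s\le1}$ be the martingale transform of $F$ obtained by multiplying, in the stochastic integral representation of $F$, each jump coming from a jump $z$ of $Z$ by the factor $\phi(z)$ and the Gaussian part associated with a direction $\theta\in\sphere$ by $\varphi(\theta)$; then $N_0=0$ (it is a pure compensated integral), $N$ is again a martingale since $\phi,\varphi$ are bounded, and, as $\|\phi\|_\infty\le1$ and $\|\varphi\|_\infty\le1$, the integrand $\Theta$ of the transform satisfies $|\Theta|\le1$, so $[F,F]_s-[N,N]_s=|F_0|^2+\int_{(0,s]}(1-|\Theta_u|^2)\,d[F,F]_u$ is nondecreasing.

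Two facts then finish the proof. First, since $|N_0|=0\le|F_0|$ and $[F,F]-[N,N]$ is nondecreasing, $N$ is differentially subordinate to $F$, so the Burkholder--Wang inequality \cite{MR1334160} gives $\|N_1\|_p\le(p^*-1)\|F_1\|_p=(p^*-1)\|f\|_p$. Second --- the main point --- one shows $\E[N_1 G_1]=\Lambda(f,g)$ with $m$ as in \eqref{mn:genc}. Indeed $\E[N_1 G_1]=N_0 G_0+\E\langle N,G\rangle_1=\E\langle N,G\rangle_1$; one writes $f,g$ by Fourier inversion and computes $d\langle N,G\rangle_s$ from the It\^o formula for L\'evy processes: at a jump $z$ of $Z$ the increment of $N$ carries $\phi(z)\bigl(e^{i(B^T\xi,z)}-1\bigr)$ and that of $G$ carries $\bigl(e^{i(-A^T\xi,z)}-1\bigr)$, with the Gaussian parts weighted analogously by $\varphi$. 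Integrating in $(\xi,x,s)$: the integral over the common starting point $x$ forces the factor $\hat g(-\xi)$ of \eqref{eq:dmf2} together with $e^{\Psi(B^T\xi)+\Psi(-A^T\xi)}$ (the two characteristic functions at time $1$); the identity $\E e^{i(B^T\xi-A^T\xi,\,Z_s)}=e^{s\Psi(B^T\xi-A^T\xi)}$ and the integration over $s\in[0,1]$ produce $q\bigl(\Psi(B^T\xi-A^T\xi)-\Psi(B^T\xi)-\Psi(-A^T\xi)\bigr)$; and the $\phi$- and $\varphi$-weighted covariation of the $B^T\xi$-increments against the $-A^T\xi$-increments produces, via \eqref{eq:l-ctu}, the cross term $\wt\Psi(B^T\xi-A^T\xi)-\wt\Psi(B^T\xi)-\wt\Psi(-A^T\xi)$. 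The degenerate case (vanishing denominator in \eqref{mn:genca}) is exactly \eqref{mn:genca0}, the $s$-integral then giving $1$ in place of $q$. Granting these, H\"older's inequality gives
\[
|\Lambda(f,g)|=\bigl|\E[N_1 G_1]\bigr|\le\|N_1\|_p\,\|G_1\|_q\le(p^*-1)\,\|f\|_p\,\|g\|_q,
\]
which is \eqref{szac:mn} with the claimed constant.

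The main obstacle is the symbol identity: one must carry the three factors $e^{\Psi(B^T\xi)+\Psi(-A^T\xi)}$, $q(\cdot)$ and the cross term through the computation consistently across all the normalizations in \eqref{eq:l-c}--\eqref{eq:l-ct} (the $-\tfrac12$ in the Gaussian part, the truncation $1_{|z|\le1}$ and the drift $\gamma$, the factor $\tfrac12$ in It\^o's formula) and verify that no extraneous terms remain --- this is the ``slight modification'' of the computations of \cite{MR2345912, 2011RBKBAB} mentioned in the introduction. The only other technical point, the $\sigma$-finiteness of the underlying measure, is dealt with in the usual way by first taking $x$ uniform on a ball $B_R\subset\Rd$ and letting $R\to\infty$; the rest is the differential-subordination machinery already available.
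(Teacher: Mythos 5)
Your architecture is the paper's: a parabolic (function-type) martingale and a $\phi,\varphi$-transformed martingale, both driven by a single L\'evy process pushed forward through the two matrices $A$ and $B$, with differential subordination, Burkholder--Wang and H\"older giving the constant $p^*-1$, and the symbol \eqref{mn:genc} extracted from $\E F_1G_1=\E[F,\overline G]_1$ via the L\'evy system. (Which side carries the transform --- your $f$/$B$ side versus the paper's $g$/$B$ side --- is immaterial, since the covariation only sees the product of increments and the weight enters once either way.)

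The one substantive difference is also the one genuine gap. You construct the transform $N$ directly for a general L\'evy process, ``multiplying each jump by $\phi(z)$ and the Gaussian part associated with a direction $\theta\in\sphere$ by $\varphi(\theta)$.'' For a general exponent \eqref{eq:l-c} this object is not defined by your sentence: when $\nu$ is infinite the sum over jumps must be replaced by compensated Poisson integrals, and --- more seriously --- the Brownian component only has covariance $\int_\sphere\theta\theta^T\mu(d\theta)$, so there is no canonical ``Gaussian increment in direction $\theta$'' to multiply by $\varphi(\theta)$ unless one first realizes the Gaussian part as a stochastic integral over $\sphere$ against a white noise with intensity $\mu$. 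The paper's proof is organized precisely to avoid having to do this: it first proves the theorem for compound Poisson processes (finite $\nu$, no Gaussian part, where the It\^o formula is an elementary sum over finitely many jumps plus a compensator), treats the drift separately as a translation isometry, and then reaches the general case by approximating $\Psi$ and $\wt\Psi$ with $\nu_\e=1_{\{|z|>\e\}}\nu$ and $\mu_\e(dr\,d\theta)=\e^{-2}\delta_\e(dr)\mu(d\theta)$ --- i.e.\ replacing the Gaussian part by small jumps of size $\e$ in direction $\theta$, weighted by $\varphi(\theta)$ through $\phi_\e$ --- and invoking Plancherel plus Fatou to pass the uniform bound $p^*-1$ through the pointwise convergence $m_\e\to m$. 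You should either supply the white-noise representation of the Gaussian part (as in the cited references) or adopt this reduction; as written, the existence and differential subordination of $N$ in the general case is asserted rather than established. The rest of your argument (the subordination computation $[F,F]-[N,N]=|F_0|^2+\int(1-|\Theta|^2)\,d[F,F]$, the $L^p$-isometry from translation invariance, the identification of the three factors of \eqref{mn:genc} from the $x$-, $s$- and $z$-integrations) is correct and matches the paper.
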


Theorem~\ref{th:bfm} is proved in Section~\ref{sec:j} by using stochastic calculus of L\'evy processes. In Section~\ref{sec:e} we make some clarifying comments 
and point out a few symbols resulting from \eqref{mn:genc}.
An alternative approach for Gaussian L\'evy processes is given in Section~\ref{sec:c}, where we use the  familiar and more compact classical It\^o calculus.
This, however, boils down to taking $\nu=0$ in \eqref{eq:l-c},
and yields only symmetric symbols.
Details of  the stochastic calculus needed in this note may be found in \cite{MR2345912, 2011RBKBAB}. We refer to \cite{MR1406564, MR1739520} for information on L\'evy processes, including compound Poisson processes, and to \cite{MR745449,MR688144, MR2020294} for various expositions of stochastic calculus. 
Burkholder's method is discussed in depth in \cite{2010RB},  and a classical treatment of Fourier multipliers may be found in \cite{MR0290095}. A recent study of non-symmetric homogeneous symbols is given in \cite{2010arXiv1008.3044M}. 
As we already remarked, multipliers with symmetric symbols were obtained by similar methods in \cite{MR1406564, MR1739520}, and they include, e.g., Marcinkiewicz-type fractional multipliers, the Beurling-Ahlfors operator and the second order Riesz transforms. 
We also note that  the bound \eqref{eq:psm1} cannot in general be improved, because it is optimal for second order Riesz transforms (\cite{MR1739520,2011arXiv1111.7212B}). 

While we considerably extend the class of symbols manageable by our methods, we fall short of non-symmetric symbols homogeneous of degree $0$.  Specifically, homogeneous symbols may appear as the second factor (the ratio) in \eqref{mn:gen3} or \eqref{mn:genca}, but they are tempered at the origin and infinity by the first factor therein, which involves the Fourier transform of the semigroup. 
Replacing $\Psi$ and $\wt\Psi$ by $u\Psi$ and $u\wt\Psi$  and letting $u\to \infty$ usually removes the first factor in \eqref{mn:gen3} and \eqref{mn:genca} if $A=B$. The resulting symbols are given in \eqref{mn:gencd} below, and include many symmetric symbols homogeneous of degree $0$, see \eqref{eq:Bcs}.
We wonder if a different pairing or other modifications of our methods could produce symbols which are both discontinuous and non-symmetric.

Below we will often use the quadratic variation $[F,F]$ and covariation $[F,G]$ of square-integrable continuous-time c\`adl\`ag martingales $F$, $G$. Recall that $[F,F]$ is the unique adapted right-continuous non-decreasing process with jumps $[F,F]_t-[F,F]_{t-}=(F_t-F_{t-})^2$, and such that $t\mapsto F^2_t - [F,F]_t$ is a (continuous) martingale starting at $0$ (\cite[VII.42]{MR745449}). 
We say that $F$ is {\it differentially subordinate} to $G$ if $t\mapsto [G,G]_t-[F,F]_t$ is nonnegative and non-decreasing (\cite{MR1334160}). The covariation $[F,G]$ is defined by polarization, and we have $\E F_t G_t=\E[F,\overline{G}]_t$.
All the functions and measures considered in this paper are assumed to be Borelian.

\section{Proof of Theorem~\ref{th:bfm}}\label{sec:j}

We will first prove the result for 
\begin{equation}\label{eq:tw1}
\Psi(\zeta)= \int_{\Rd} \left(e^{i(\zeta,z)}-1\right)\nu(dz) \,, \qquad \zeta \in \Rn,
\end{equation}  
and
\begin{eqnarray}\label{eq:l-ct1}
\wt\Psi(\zeta) = \int_{\R^n} \left(e^{i(\zeta,z)} -1\right)\phi(z)\nu(dz)
 \,, \qquad \zeta \in \Rn,
\end{eqnarray}
where $\nu$ is finite. To this end 
we only need to define $\Lambda$ satisfying (\ref{eq:dmf2}) and (\ref{szac:mn}).

By $f$ and $g$ below we will denote complex-valued smooth compactly supported (i.e. $C^\infty_c$) functions on $\Rd$ or $\R^n$. 
Let $(Y_t,t\ge 0)$ be a compound Poisson process on $\R^n$ with the L\'evy measure $\nu$, semigroup $(P_t)$, expectation $\E$ and jumps $\dY_t=Y_t-Y_{t^-}$. Let $x\in \R^n$. Recall that $P_tf(x)=\E f(x+Y_t)=\int_\Rd f(x+y)p_t(dy)$, where $t\geq 0$,
$$p_t = e^{-t|\nu|} \sum_{n=0}^{\infty}\frac{\nu^{*n}}{n!},$$
and $\hat{p}_t(\zeta)=\E e^{i(\zeta,Y_t)}=e^{t\Psi(\zeta)}$ for $\zeta\in \R^n$.
The process $(\Ao Y_t, t\geq 0)$ is compound Poisson, too, with 
the L\'evy measure equal to (the pushforward measure) $\Ao\nu=\nu\circ A^{-1}$
on $\Rd\setminus\{0\}$  (\cite[Proposition~11.10]{MR1739520}). Indeed, for $\xi\in \Rd$,
$$\E e^{i(\xi,\Ao Y_t)}=e^{t
\Psi(\Ao^T\xi)}
=
\int_{\Rn} \left(e^{i(\xi,\Ao z)}-1\right)\nu(dz)
=\int_{\Rd} \left(e^{i(\xi,z)}-1\right)\Ao\nu(dz).
$$
We also have
$\E f(x+\Ao Y_t)=\int f(x+\Ao y)p_t(dy)=P_t^{\Ao} f(x)$,
where $$P^\Ao_t f(x)=\int f(x+\Ao y)p_t(dy).$$
We proceed similarly for $(\At Y_t, t\geq 0)$.
We remark that $(\Ao Y_t)$ and $(\At Y_t)$  have fairly general dependence structure, e.g. yield pairs of projections of $Y$.

We consider the filtration $\F_t = \sigma \{ Y_s: 0\leq s \leq t\}$. 
For $0\le t\le 1$ we define the {\it parabolic} martingale 
$F_t= F_t(x;f,\Ao )$, where
\begin{eqnarray*}
F_t(x;f,\Ao  ) &=& \E [f(x+\Ao  Y_1 )|\F_t] 
= \E [f(x+\Ao  (Y_1-Y_t) + \Ao Y_t  )|\F_t]\\
&=&  \int_{\Rd} f(x+\Ao y+\Ao Y_t )p_{1-t}(dy) = P_{1-t}^{\Ao }f(x+\Ao Y_t).
\end{eqnarray*}
Thus $F$ is of {\it function-type}, i.e. a composition of a (parabolic) function with a (space-time) stochastic process.
\noindent By It\^o formula \cite[p.17]{2011RBKBAB} for  $(\Ao Y_t)$,
\begin{eqnarray*}
&&F_t - F_0= \sum_{\substack{0 < v \leq t \\ \dY_v \neq 0}}
[P^{\Ao }_{1-v}f(x+\Ao  Y_v) - P^{\Ao }_{1-v}f(x+\Ao  Y_{v-})]\\
 &&- \int_0^t \int_{\Rd} [P^{\Ao }_{1-v}f(x+\Ao  (Y_v +z)) - P^{\Ao }_{1-v}f(x+\Ao  Y_v)]\nu(dz)dv.
\end{eqnarray*}
Following \cite{MR2345912, 2011RBKBAB} we also define more {\it general} (i.e. non function-type) martingales
\begin{eqnarray*}
&&G_t(x;g,\At  ,\phi) = \sum_{\substack{0 < v \leq t \\ \dY_v \neq 0}}[P^{\At }_{1-v}g(x+\At  Y_v) - P^{\At }_{1-v}g(x+\At  Y_{v-})]\phi(\dY_v)\\
 &&- \int_0^t \int_{\Rd} [P^{\At }_{1-v}g(x+\At  (Y_v +z)) - P^{\At }_{1-v}g(x+\At  Y_v)]\phi(z)\nu(dz)dv
\end{eqnarray*}
driven by $(\At Y_t)$. We see that
$F_t(x;f,\At ) = G_t(x;f,\At ,1)$. 
Let
\begin{equation}
\Lambda(f,g)= \int_{\Rd} \E F_1(x;f,\Ao  )G_1(x;g,\At  ,\phi)dx.
\end{equation}
\noindent By \cite[p.17]{2011RBKBAB}, $G_t:=G_t(x;g,\At  ,\phi)$ has quadratic variation
$$[G,G]_t = \sum_{0<v\leq t}| P^{\At }_{1-v}g(x+\At Y_v) -  P^{\At }_{1-v}g(x+\At Y_{v-})|^2|\phi(\dY_v)|^2.$$
The quadratic variation of $F$ is
$$[F,F]_t = |F_0|^2 + \sum_{0<v\leq t}| P^{\Ao }_{1-v}f(x+\Ao Y_v) -  P^{\Ao }_{1-v}f(x+\Ao Y_{v-})|^2.$$
Thus, $G(x;g,\At,\phi)$ is differentially subordinate to $F(x;g,\At )$.
 Let $p,q \in (1, \infty)$ and $1/p + 1/q =1.$ By Fubini-Tonelli,
\begin{eqnarray}\label{eq777}
&&\int_{\R^d}\E|F_1(x;f,\Ao )|^p dx = \int_{\R^d} \E|f(x+\Ao  Y_1)|^p dx
= 
\int_{\R^d}\int_{\R^d} |f(x+\Ao  y)|^p p_1(dy)dx\nonumber\\
&&=\int_{\R^d}\int_{\R^d} |f(x)|^p p_1(dy) dx =||f||_p^p.
\end{eqnarray}
We then use Burkholder-Wang theory (\cite{MR1334160}) and the identity $p^*-1=q^*-1$:
$$\E|G_1|^q \leq (q^*-1)^q \E|g(x+\At Y_1)|^q 
= (p^*-1)^q \E|g(x+\At Y_1)|^q
.$$
Following (\ref{eq777}), we now obtain 
$$\int_{\Rd}\E|G_1(x;g,\At ,\phi)|^q dx \leq (p^*-1)^q \int_{\Rd} |g(x)|^q dx.$$
By H\"older inequality,
$|\Lambda(f,g)|\leq 
(p^* -1) ||f||_p ||g||_q$, 
as required in  \eqref{szac:mn}.  
To obtain \eqref{eq:dmf2}, we recall that $\E F_1 G_1 = \E [F, \overline{G}]_1.$ Furthermore,
\begin{equation}\label{eq:trPA}
\widehat{P_t^{A} f}(\xi) = \widehat{f}(\xi) e^{t\Psi(-A^T\xi)}.\nonumber
\end{equation}
By this, the L\'evy system (see \cite{2011RBKBAB, MR1334160}) and Plancherel theorem,
\begin{eqnarray*}
&&\Lambda(f,g)= \int_{\Rd} \E \sum_{\substack{0< v \leq 1\\\dY_v \neq 0}}[P^{\Ao }_{1-v}f(x+\Ao Y_v) - P^{\Ao }_{1-v}f(x+\Ao  Y_{v-})]\\
&&\qquad\qquad\qquad\qquad\times [P^{\At }_{1-v}g(x+\At Y_v) - P^{\At }_{1-v}g(x+\At  Y_{v-})]\phi(\dY_v)dx\\
&=& \int_{\Rd}  \int_0^1\int_\Rd  \int_\Rd [P^{\Ao }_{1-v}f(x+\Ao (y+z)) - P^{\Ao }_{1-v}f(x+\Ao  y)]\\
&&\qquad\qquad\qquad\times[P^{\At }_{1-v}g(x+\At (y+z)) - P^{\At }_{1-v}g(x+\At  y)]\phi(z)\nu(dz)p_v(dy)dvdx\\
&=& (2\pi)^{-d}\int_{\Rd}m(\xi)\widehat{f}(\xi)\widehat{g}(-\xi)d\xi,
\end{eqnarray*}
where
\begin{eqnarray}
\nonumber
m(\xi)&=&  \int_0^1 \int_{\Rd} \int_{\Rd}\bigg( e^{-i(\xi,\Ao (y+z))} - e^{-i(\xi,\Ao  y )}\bigg)\bigg( e^{i(\xi,\At (y+z) )} - e^{i(\xi,\At  y )}\bigg)\\\nonumber
&& \qquad\qquad\qquad\times e^{(1-v)\Psi(-\Ao ^T\xi)}  e^{(1-v)\Psi(\At ^T\xi)}\phi(z)\nu(dz)p_v(dy)dv  \\\nonumber
&=& \int_0^1  \int_{\Rd} \int_{\Rd} e^{i(B^T\xi - A^T\xi, y)} e^{(1-v)(\Psi(\At ^T\xi)+\Psi(-\Ao ^T\xi))} \\\nonumber
&&\qquad\qquad \times \bigg( e^{i(\xi,\At  z)} - 1\bigg) \bigg( e^{-i(\xi,\Ao  z)} - 1\bigg)\phi(z)\nu(dz)p_v(dy)dv\\\label{eq:wnmpc}
&=& \int_0^1  \int_{\Rd} e^{v\Psi(B^T\xi - A^T\xi)}e^{(1-v)(\Psi(\At ^T\xi)+\Psi(-\Ao ^T\xi))} \\\nonumber
&&\qquad   \qquad\qquad\times \bigg( e^{i(\xi,\At  z)} - 1\bigg) \bigg( e^{-i(\xi,\Ao  z)} - 1\bigg)\phi(z)  \nu(dz)dv.
\end{eqnarray} 
We directly verify (compare \eqref{eq:l-ctu}) that
$$\int\limits_{\Rd}\bigg( e^{i(\xi,\At  z)} - 1\bigg)\bigg( e^{-i(\xi,\Ao z)} - 1\bigg)\phi(z) \nu(dz)=\wt\Psi(B^T\xi - A^T\xi)-\wt\Psi(\At ^T\xi)-\wt\Psi(-\Ao ^T\xi).$$ 
We integrate \eqref{eq:wnmpc} with respect to $dv$ and obtain \eqref{mn:genc}.

We shall next give an extension to compound Poisson processes with drift. 
We claim that the multiplier resulting from $\phi$ and the L\'evy - Khinchine exponent
\begin{eqnarray}
\nonumber
 \int_{\Rd}\left(e^{i(\xi,z)}-1-i(\xi,z)1_{|z|\le 1}\right)\nu(dz)  +i(\xi,\gamma)
=\int_{\Rd}(e^{i(\xi,z)}-1)\nu(dz)  +i(\xi,h),
\label{eq:dd}
\end{eqnarray}
where $h=\gamma-\int_{\Rd}z1_{|z|\le 1}\nu(dz)$, 
has the norm bounded by $p^*-1$ on $L^p(\Rd)$, too.
The operator $T_h f(x)=f(x-h)$ is an isometry of $L^p(\Rd)$, and also a Fourier multiplier with symbol $e^{i(\xi,h)}$. We can multiply $m(\xi)$ in \eqref{mn:genc}
by
$e^{i(B^T\xi - A^T\xi,h)}$, without changing the norm of the multiplier. 
The exponential function absorbs into the first factor on the right-hand side of \eqref{mn:genc}, which grants the extension.

We will now pass to general L\'evy processes, i.e. arbitrary $\Psi$ and $\wt\Psi$ given by \eqref{eq:l-c} and \eqref{eq:l-ct}.
We first note that the norm bound of our multipliers is preserved under pointwise convergence of the symbols, which follows from Plancherel theorem and Fatou's lemma in the same way as in \cite[the proof of Theorem 1.1]{2011RBKBAB}.
Then we remark that $m$ in \eqref{mn:genc} depends continuously on $\Psi$ and $\wt\Psi$.
Finally we recall the following approximation procedure:
let $\varepsilon\to 0^+$,
$$\nu_\varepsilon= 1_{\{ |z| > \varepsilon \}}\nu \ ,\quad  \mbox{ and }\quad 
  \mu_{\varepsilon}(drd\theta) = \varepsilon^{-2}\delta_{\varepsilon}(dr)\mu(d\theta) \,.
$$
Here $(r,\theta) \in (0,\infty)\times\sphere$ are the polar coordinates in $\Rn$
and $\delta_\varepsilon$ is the probability measure concentrated at $\varepsilon$.
We consider
\begin{eqnarray*}
\nonumber
\Psi_\varepsilon(\xi) &=& \int_{\Rd}\left(e^{i(\xi,z)}-1-i(\xi,z)1_{|z|\le 1}\right)(\nu_\varepsilon+\mu_\varepsilon)(dz)  +i(\xi,\gamma),
\end{eqnarray*}
and
\begin{eqnarray*}
\nonumber
\wt\Psi_\varepsilon(\xi) &=& \int_{\Rd}\left(e^{i(\xi,z)}-1-i(\xi,z)1_{|z|\le 1}\right)\phi_\varepsilon(z)(\nu_\varepsilon+\mu_\varepsilon)(dz),
\end{eqnarray*}
where
$\phi_\varepsilon(z)=1_{\{ |z| > \varepsilon \}}\phi(z) + 1_{\{ |z| = \varepsilon \}}\varphi(z/|z|)$.
By dominated convergence, $\Psi_\varepsilon(\zeta)\to \Psi(\zeta)$ and 
$\wt\Psi_\varepsilon(\zeta)\to \wt\Psi(\zeta)$ (see \cite[(3.3)]{2011RBKBAB}), which yields the convergence of the resulting symbols (say, $m_\varepsilon$) to $m$ in \eqref{mn:genc}, and ends the proof.
\qed

\section{Comments and examples}\label{sec:e}
Unless stated otherwise the multipliers discussed in this section have norms bounded by $p^*-1$ on $L^p(\Rd)$ for $1<p<\infty$, as results from the preceding discussion. We will focus on the symbols.

We note that $m(\xi)$ given by \eqref{mn:genc} is continuous in $\xi$, because so are $\Psi(\xi)$ and $\wt\Psi(\xi)$.  By \eqref{eq:dmf},  Plancherel theorem and \eqref{eq:psm1} for $p=2$ we also see that $|m(\xi)|\leq 1$.

Let $u>0$. We may consider $u\Psi$ and $u\wt \Psi$ instead of $\Psi$ and $\wt \Psi$
in \eqref{mn:genc}. If $\Ao=\At$, $\Re \Psi(A\xi)<0$ for $\xi\in \Rd$, and $u\to \infty$, then in the limit
we obtain the symbol 
\begin{align}\label{mn:gencd}
m(\xi)=\frac{\wt \Psi(A^T\xi)+\wt \Psi(-A^T\xi)}
{\Psi(A^T\xi)+ \Psi(-A^T\xi)}.
\end{align} 
Thus, the assumption $A=B$ rules out non-symmetric symbols. In fact, if $A\neq B$, then the corresponding L\'evy processes (see the proof of Theorem~\ref{th:bfm}) separate over time, and their parabolic martingales quickly decorrelate. We do not see a way to reproduce a nontrivial analogue of \eqref{mn:gencd} in this situation.
In this connection we also note that if $A=B=\I$ and $\Re\Psi(\xi)<0$, then \eqref{mn:gencd} is equivalent to 
\cite[(1.4)]{2011RBKBAB}. Furthermore, if $A\in \R^{d\times d}$ and $\det A\neq 0$, then multipliers corresponding to symbols $m(\xi)$ and $m(A^T\xi)$ have equal norms on $L^p(\R^d)$. In such a case \eqref{mn:gencd} is merely a trivial extension of \cite[(1.4)]{2011RBKBAB}.
If $\nu=0$, then \eqref{mn:gencd} yields, e.g., the symbols
\begin{equation}\label{eq:Bcs}
m(\xi)=\frac
{\int_{\sphere}\left( \xi, \theta \right)^2\varphi(\theta)\mu(d\theta)}
{\int_{\sphere}\left( \xi, \theta \right)^2\mu(d\theta)}, \quad \xi\in \Rd.
\end{equation}
Further discussion and examples related to \eqref{mn:gencd} may be found in \cite{2011RBKBAB}.
In particular \cite{2011RBKBAB} gives remarks on the integral form of the quadratic form (the second term) in \eqref{eq:l-c}, as opposed to the more usual matrix form, and yields the following symbols
\begin{align*}
  m(\xi)&=\frac{\ln \left(1+\xi_j^{-2}\right)}
  {
  \ln \left(1+\xi_1^{-2}\right)+\cdots+
  \ln \left(1+\xi_d^{-2}\right)}\, , \\
m(\xi)&=-2\xi_j\xi_k/|\xi|^2\ .
\end{align*}
Here $\xi\in \Rd\setminus\{0\}$, $j,k=1,\ldots,d$, and $j\neq k$.

To exhibit a non-symmetric symbol resulting from our construction, we let $n=d$, $\alpha \in (0,2)$ and $\Psi(\xi) = -|\xi|^{\alpha}$, so that   $\mu=0$, $\gamma=0$, $\nu (dz) = c_\alpha |z|^{d-\alpha}dz$, and $c_\alpha=\Gamma(\frac{d+\alpha}{2})2^\alpha\pi^{-d/2}/|\Gamma(-\frac{\alpha}{2})|$ in \eqref{eq:l-c} (see \cite{MR2569321}). These correspond to the isotropic $\alpha$-stable L\'evy process. If $\alpha\in (0,1)$ and $B= \I=-A$ in \eqref{mn:genc}, then by  \eqref{mn:gen3} and \eqref{eq:l-ctu},
\begin{eqnarray*}
&&m(\xi) = \frac{e^{-|2\xi|^{\alpha}} - e^{-2|\xi|^{\alpha}} }{-|2\xi|^{\alpha}+ 2|\xi|^{\alpha}}\int_{\Rd} \bigg( e^{i(\xi, z)} - 1\bigg)^2\phi(z)\nu (dz).
\end{eqnarray*}
Let $d=1$ and $\phi(z) = \sgn(z)$. We have
$( e^{i\xi z} - 1)^2=(e^{2i\xi z} - 1)-2( e^{i\xi z} - 1)$ and
\begin{eqnarray*}
&&\int_{\R}\frac{e^{i\xi z}-1}{|z|^{1+\alpha}}\phi(z)dz 
= 2i \int_0^{\infty} \frac{\sin{\xi z}}{|z|^{1+\alpha}}dz
= -2i\Gamma(-\alpha)\sin\frac {\pi\alpha}{2} \sgn(\xi)|\xi|^{\alpha} .
\end{eqnarray*} 
By this and the multiplication and reflection formulas for the gamma function,
\begin{align}\label{eq:wca}
\int_{\R} \bigg( e^{i\xi z} - 1\bigg)^2\phi(z)\nu (dz)
=-i\tan \frac{\pi\alpha}{2}\left[|2\xi|^\alpha-2|\xi|^\alpha\right].
\end{align}
Therefore,
\begin{eqnarray}\label{mn:ep}
m(\xi) = i\tan \frac{\pi \alpha}{2}
\sgn(\xi) (e^{-|2\xi|^{\alpha}} - e^{-2|\xi|^{\alpha}} ), \qquad\xi \in \R.
\end{eqnarray}
We may let $\alpha\to 1$ in \eqref{mn:ep}, and  use l'Hospital's rule to obtain
$$m(\xi) = \frac{4i\ln 2}{\pi}\; \xi\exp(-2|\xi|).$$
This agrees well with with \eqref{mn:gen4} and \eqref{mn:genca0}, see \eqref{eq:wca}.
By analytic continuation, \eqref{mn:ep} extends to $\alpha \in (1,2)$.

As seen in the proof of Theorem~\ref{th:bfm}, the drift $\gamma$ plays little role in our results, according with the conclusions of  \cite{2011RBKBAB}.

\section{Gaussian case}\label{sec:c}

For multipliers resulting from the linear transformations of the Brownian motion there is an alternative direct approach based on the classical It\^o calculus. The calculations are simpler and may shed some light on the procedures in Section~\ref{sec:j}. 
\begin{theorem}\label{th:m}
Let $d,n\in \N$ and $\Ao  ,\At \in \R^{d \times n}$. Let $K \in \C^{n \times n}$ satisfy
\begin{equation}\label{eq:o1}
|Kz|\leq |z| \quad \textrm{for } \;z\in \C^n\,.
\end{equation}  
For each $p \in (1,\infty)$, the Fourier multiplier $M$ with the symbol
\begin{equation}\label{br:mn}
m(\xi) = \bigg[e^{-|\Ao ^T\xi-\At ^T\xi|^2} 
- e^{-|\Ao ^T\xi|^2-|\At ^T\xi|^2}\bigg]\frac{(\Ao ^T\xi,K \At ^T\xi)}{(\Ao ^T\xi,\At ^T\xi)},
\end{equation}
is bounded in $L^p(\R^d)$. In fact, $\|M f\|_p\leq (p^*-1)\|f\|_p ~$ for $f\in \Lp(\Rd)$,
where we assume
$m(\xi) = e^{-|\Ao ^T\xi|^2-|\At ^T\xi|^2}(\Ao ^T\xi,K \At ^T\xi)$
if the denominator in \eqref{br:mn} is zero.
\end{theorem}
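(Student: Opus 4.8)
The plan is to rerun the argument of Section~\ref{sec:j} with the compound Poisson process replaced by a Brownian motion and the L\'evy-It\^o formula by the classical It\^o formula, so that jump sums and L\'evy-system integrals become It\^o integrals. First I would fix $f,g\in C_c^\infty(\Rd)$ and take a Brownian motion $(W_t,\,0\le t\le 1)$ on $\Rn$ normalized so that $\E e^{i(\zeta,W_t)}=e^{-t|\zeta|^2}$, equivalently $d[W^j,W^k]_t=2\delta_{jk}\,dt$, with filtration $\F_t=\sigma\{W_s:\,s\le t\}$; for $C\in\{\Ao,\At\}$ let $(P^C_t)$ be the semigroup of $(CW_t)$, so that $\widehat{P^C_t\psi}(\xi)=\widehat\psi(\xi)\,e^{-t|C^T\xi|^2}$. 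With $x\in\Rd$ a parameter I would introduce the function-type parabolic martingales
\[
F_t=P^\Ao_{1-t}f(x+\Ao W_t),\qquad \wt G_t=P^\At_{1-t}g(x+\At W_t),\qquad 0\le t\le 1,
\]
which, by It\^o's formula, are continuous $L^2$ martingales --- the drift vanishes because $P^C_{1-t}\psi$ solves the backward heat equation of $CW$ --- with $dF_t=a_t^T\,dW_t$ and $d\wt G_t=b_t^T\,dW_t$, where $a_t=\Ao^T(\nabla P^\Ao_{1-t}f)(x+\Ao W_t)$ and $b_t=\At^T(\nabla P^\At_{1-t}g)(x+\At W_t)$; finally I would set $G_t=\int_0^t (Kb_s)^T\,dW_s$.

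Next I would prove the norm bound. Since the paths are continuous, $[\wt G,\overline{\wt G}]_t=|\wt G_0|^2+2\int_0^t|b_s|^2\,ds$ and $[G,\overline G]_t=2\int_0^t|Kb_s|^2\,ds$, so the hypothesis $|Kz|\le|z|$ makes $t\mapsto[\wt G,\overline{\wt G}]_t-[G,\overline G]_t$ nonnegative and nondecreasing, i.e.\ $G$ is differentially subordinate to $\wt G$. Putting $\Lambda(f,g)=\int_\Rd\E\,F_1 G_1\,dx$, which equals $\int_\Rd\E[F,G]_1\,dx$ since $G_0=0$, translation invariance of Lebesgue measure gives $\int_\Rd\E|F_1|^p\,dx=\int_\Rd\E|f(x+\Ao W_1)|^p\,dx=\|f\|_p^p$ and, likewise, $\int_\Rd\E|\wt G_1|^q\,dx=\|g\|_q^q$; the Burkholder-Wang inequality for differentially subordinate martingales (\cite{MR1334160}) and $q^*-1=p^*-1$ then yield $\int_\Rd\E|G_1|^q\,dx\le(p^*-1)^q\|g\|_q^q$, and two applications of H\"older give $|\Lambda(f,g)|\le(p^*-1)\|f\|_p\|g\|_q$, i.e.\ \eqref{szac:mn}.

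It then remains to identify $\Lambda$, i.e.\ to check that $\Lambda(f,g)=(2\pi)^{-d}\int_\Rd m(\xi)\widehat f(\xi)\widehat g(-\xi)\,d\xi$ with $m$ as in \eqref{br:mn}; granting this, \eqref{eq:dmf2} and a density argument give $\|Mf\|_p\le(p^*-1)\|f\|_p$, first for $f\in C_c^\infty(\Rd)$ and then for all $f\in\Lp(\Rd)$. From $[F,G]_1=2\int_0^1 a_s^T K b_s\,ds$, with $a_s^T K b_s=\bigl((\nabla P^\Ao_{1-s}f)(x+\Ao W_s)\bigr)^T\Ao K\At^T\bigl((\nabla P^\At_{1-s}g)(x+\At W_s)\bigr)$, I would integrate in $x$, apply Fubini, shift $x\mapsto x-\Ao W_s$, and use Plancherel in the form $\int_\Rd u\,v\,dx=(2\pi)^{-d}\int_\Rd\widehat u(\xi)\widehat v(-\xi)\,d\xi$ of \eqref{eq:dmf2}: the gradients then contribute $\widehat{\nabla P^\Ao_{1-s}f}(\xi)=i\xi\,e^{-(1-s)|\Ao^T\xi|^2}\widehat f(\xi)$ and its analogue for $g$, the shift by $(\At-\Ao)W_s$ contributes a phase $e^{-i(\At^T\xi-\Ao^T\xi,\,W_s)}$, and averaging over $W_s$ turns that phase into $e^{-s|\At^T\xi-\Ao^T\xi|^2}$. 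Using $(i\xi)^T\Ao K\At^T(i\xi)=-(\Ao^T\xi,K\At^T\xi)$ and collecting the terms I would arrive at $\Lambda(f,g)=(2\pi)^{-d}\int_\Rd(\Ao^T\xi,K\At^T\xi)\,\widehat f(\xi)\,\widehat g(-\xi)\,N(\xi)\,d\xi$ with
\[
N(\xi)=2\int_0^1 e^{-(1-s)(|\Ao^T\xi|^2+|\At^T\xi|^2)-s|\Ao^T\xi-\At^T\xi|^2}\,ds.
\]
Evaluating this integral and using the polarization identity $|\Ao^T\xi-\At^T\xi|^2=|\Ao^T\xi|^2+|\At^T\xi|^2-2(\Ao^T\xi,\At^T\xi)$ gives $N(\xi)=\bigl[e^{-|\Ao^T\xi-\At^T\xi|^2}-e^{-|\Ao^T\xi|^2-|\At^T\xi|^2}\bigr]/(\Ao^T\xi,\At^T\xi)$ when $(\Ao^T\xi,\At^T\xi)\ne0$, and its limiting value on the locus $(\Ao^T\xi,\At^T\xi)=0$ otherwise, so that $(\Ao^T\xi,K\At^T\xi)\,N(\xi)$ is exactly $m(\xi)$ of \eqref{br:mn} with its stated convention.

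The probabilistic half just reproduces the original continuous-path Burkholder-Wang situation, so I expect the main obstacle to be this last step. The bookkeeping there is delicate: one must place $K$ so that the symbol reads $(\Ao^T\xi,K\At^T\xi)$ rather than a transpose or a conjugate of it; one must track the signs and the constant produced by the normalization of $W$ and reconcile it with the coefficient in \eqref{br:mn}; one must justify interchanging $\E$, $\int_\Rd dx$ and $\int_0^1 ds$ and using Plancherel for $C_c^\infty$ data; and one must handle the locus $(\Ao^T\xi,\At^T\xi)=0$ by continuity of the integrand in $\xi$.
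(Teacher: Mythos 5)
Your proposal is correct and follows essentially the same route as the paper: function-type Brownian parabolic martingales via It\^o's formula, differential subordination of $G=\int (Kb)^T\,dW$ to the function-type martingale for $g$ using $|Kz|\le|z|$, Burkholder--Wang plus H\"older for the norm bound, and a Plancherel/Gaussian computation of the time integral to identify the symbol. The only real difference is your variance-$2$ normalization of $W$, which in fact reproduces the exponents of \eqref{br:mn} exactly, whereas the paper's standard Brownian motion yields them with an extra factor $1/2$; the remaining sign bookkeeping you flag works out as you expect.
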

\begin{proof}
Let $(\mB_t)_{t\geq 0}$ be the Brownian motion in $\Rn$. 
Let $p_t$ denote the distribution of $\mB_t$. Thus, for $t>0$ we have $p_t(dw)=p_t(w)dw$, where  $p_t(w)= (2\pi t)^{-n/2}\exp(-|w|^2/(2t))$.
Let $f,g \in C_c^{\infty}(\R^d)$ and $x\in \Rd$. 
We consider the filtration
\begin{displaymath}
  \F_t=\sigma\{\mB_{s}\,;\; 0\leq s\leq t\}
\,,\quad t\geq 0\,,
\end{displaymath}
and the parabolic martingale $F_t= F_t(x;f,\Ao)$, where
\begin{eqnarray*}
F_t(x;f,\Ao) &=& \E [f(x+\Ao  \mB_1)|\F_t ]=\E [ f(x+\Ao  \mB_t + \Ao  (\mB_1-\mB_t)|\F_t]\\
&=& \int_{\R^d}f(x+\Ao  \mB_t +\Ao  z)p_{1-t}(dz).
\end{eqnarray*}
\noindent Note that $F_1 = f(x+\Ao  \mB_1)$ and $F_0 = \E f(x+\Ao  \mB_1).$ 
Let $\tilde{f}(z) = f(\Ao  z).$  We have $\nabla \tilde f(y) = \Ao  ^T \nabla f(\Ao  y)$.
For $0\leq t\leq 1,~w \in \R^d,$ we define
\begin{equation}
h(t,w) = \int_{\R^d} f(x+\Ao  w+\Ao  z)p_{1-t}(dz).
\end{equation}
We observe that $h$ is parabolic, i.e.
\begin{eqnarray}\label{eq4}
(\frac{\partial}{\partial t}+\frac{1}{2}\Delta_w)h(t,w) &=& \int_{\Rd} f(x+\Ao  w+\Ao  z) \frac{\partial}{\partial t} \big[ p_{1-t}(z)\big] dz\nonumber\\
&+& \frac12\int_\Rd\Delta_z [f(x+\Ao  w+\Ao  z)]p_{1-t}(z)dz= 0.
\end{eqnarray}
Here $\Delta_w=\sum_{i=1}^n \partial^2/\partial w_i^2$ is the Laplacian, and
the last inequality follows from integrating by parts and the 
 heat equation
\begin{displaymath}
\frac{\partial}{\partial s} p_s(z) = \frac{1}{2} \Delta_z p_s(z), \qquad s>0, ~z \in \Rn.
\end{displaymath}
Let $p^{\Ao}_t(dy)$ be the distribution of $\Ao  \mB_t$, i.e. $p^{\Ao}_t = \Ao  p_t$ (the pushforward measure). 
We have
\begin{displaymath}
\widehat{p^{\Ao}_t}(\xi) = \exp(-t|\Ao  ^T\xi|^2/2), \qquad \xi \in \R^d,
\end{displaymath} 
$h(t,w) = f * p^A_{1-t}(x+\Ao  w)$, and $h(1,w) = f(x+\Ao  w)$. Thus, $F_t(x;f,A) = h(t,W_t).$ By (\ref{eq4}) and It\^o formula for $h$ we obtain
\begin{equation}
F_t-F_0 = \int_0^t \Ao  ^T (\nabla f) * p^{\Ao}_{1-v} (x+\Ao  \mB_v)d\mB_v.
\end{equation}
For $t \in [0,u]$ we define
\begin{eqnarray*}
G_t&=&G_t(x;g,\At,K)=\int_0^t K \At ^T (\nabla g) * p^{\At}_{1-v} (x+\At \mB_v)d\mB_v,
\end{eqnarray*}
where $p_t^{B} = \At  p_t$.  The quadratic variations of these martingales are:
\begin{align}\label{eq:qvF}
[F,F]_t &= |F_0|^2 + \int_0^t  |\Ao  ^T (\nabla f) * p^{\Ao}_{1-v} (x+\Ao  \mB_v)|^2dv ,\\
\label{eq:qvG}
[G,G]_t &= \int_0^t |K \At ^T (\nabla g) * p^{\At}_{1-v} (x+\At \mB_v)|^2dv.
\end{align}
By Burkholder-Wang theory of differentially subordinated martingales \cite{MR1334160},
\begin{equation}\label{eq7}
\E |G_t(x;g,\At,K)|^p \leq (p^*-1)^p \E|F_t(x;g,\At)|^p.
\end{equation}
Therefore we have
\begin{eqnarray}\label{eq6}
\int_{\R^d}|F_1(x;f)|^p dx &=& \int_{\R^d} |f(x+\Ao  \mB_1)|^p dx= 
\int_{\R^d}\int_{\R^d} |f(x+\Ao  y)|^p p_1(dy)dx
\nonumber\\
&=& \int_{\R^d}\int_{\R^d} |f(x)|^p p_1(dy) dx =||f||_p^p.
\end{eqnarray}
A similar identitity holds for $g$ and $q=p/(p-1)$.
Therefore,
\begin{eqnarray}\label{eq8}
\int_{\R^d}\E|G_1(x;g,\At,K)|^p dx &\leq& (p^*-1)^p ||g||_p^p.
\end{eqnarray}
We define
$$\Lambda(f,g) = \int_{\R^d} \E [F,\overline{G}]_1dx.$$
By $(\ref{eq6}), (\ref{eq8})$ and H\"older inequality for the measure $P \otimes dx$, we have
\begin{eqnarray}\label{eq:oGm}
\Lambda(f,g)
&\leq& (p^*-1)||f||_q||g||_p.
\end{eqnarray}
By Plancherel theorem, 
\begin{eqnarray}
&&\Lambda(f,g)=\int_0^1 \int_{\R^d} (2 \pi)^{-d} \int_{\R^d}(\Ao  ^T\xi, K \At ^T \xi)
e^{-(1-t) |\Ao  ^T \xi|^2/2}\nonumber\\
&&\qquad\qquad \times e^{-(1-t) |\At ^T \xi|^2/2}e^{-i(\Ao  ^T \xi,y)}e^{i(\At ^T \xi,y)}p_t(y)\widehat{f}(\xi) \widehat{g}(-\xi)d\xi dy dt\nonumber\\
&=& \int_0^1\int_{\R^d} (2 \pi)^{-d}(\Ao ^T\xi, K\At ^T \xi)e^{-(1-t)(|\Ao ^T \xi|^2 + |\At ^T \xi|^2)/2}e^{-t |\At^T\xi -\Ao^T\xi|^2/2}\nonumber\\
&&\qquad\qquad\times\widehat{f}(\xi) \widehat{g}(-\xi)d\xi dt\nonumber\\
&=& \int_{\R^d}(2 \pi)^{-d} \widehat{f}(\xi) \widehat{g}(-\xi) (\Ao ^T\xi, K\At ^T \xi)e^{-(|\Ao ^T \xi|^2 +|\At ^T \xi|^2)/2}\nonumber\\
&&\qquad\qquad\times\int_0^1 e^{-t\left[|\At^T\xi -\Ao ^T\xi|^2-|\Ao ^T \xi|^2 - |\At ^T \xi|^2\right]/2}dtd\xi\label{eq:sii}\\
&=&\int_{\R^d}(2 \pi)^{-d} \widehat{f}(\xi) \widehat{g}(-\xi) (\Ao ^T\xi, K\At ^T \xi)e^{-(|\Ao ^T \xi|^2 + |\At ^T \xi|^2)/2}
\frac{e^{ (\Ao ^T \xi, \At ^T \xi)} -1}{(\Ao ^T\xi,\At ^T \xi)}d\xi\nonumber.
\end{eqnarray}
Here we used the identity
$ |\Ao ^T \xi|^2 + |\At ^T \xi|^2 - 2(\Ao ^T \xi, \At ^T \xi)  = 
|\At ^T\xi - \Ao ^T\xi|^2$ (if $(\Ao ^T \xi, \At ^T \xi)=0$, then the inner integral in \eqref{eq:sii} equals $1$).
The symbol $m$ obtains. The multiplier's norm bound follows from \eqref{eq:oGm}, as in the proof of Theorem~\ref{th:bfm}. 
\end{proof}

If $\Ao \xi = \At \xi \neq 0$ for all $\xi\neq 0$, and we multiply the matrices by $u \rightarrow \infty$, then 
$$m(\xi) = \frac{(\Ao ^T\xi,K\Ao ^T\xi)}{(\Ao ^T\xi,\Ao ^T\xi)},$$
obtains, and the corresponding multiplier has the same norm bound $p^*-1$ (see  remarks in Theorem~\ref{th:bfm}). Such symbols were discussed in some detail in \cite{MR2345912, 2011RBKBAB}.

\noindent
{\bf Acknowledgements.} {We thank Rodrigo Ba\~nuelos, Stanis{\l}aw Kwapie\'n, Remigijus Mikulevi\v{c}ius and Jacek Zienkiewicz for encoragement and discusions.}
Special thanks are due to Mateusz Kwa\'snicki for suggesting the unit time horizon for the parabolic martingales and confirming a part of our results through an independent calculation.


\begin{thebibliography}{10}

\bibitem{2011arXiv1111.7212B}
R.~{Ba{\~n}uelos} and A.~{Os\c{e}kowski}.
\newblock {Martingales and sharp bunds for Fourier multipliers}.
\newblock {\em ArXiv e-prints}, Nov. 2011.

\bibitem{2010RB}
R.~Ba{\~n}uelos.
\newblock The foundational inequalities of {D. L. Burkholder} and some of their
  ramifications.
\newblock to appear, Illinois Journal of Mathematics, Volume in honor of D.L.
  Burkholder, http://arxiv.org/abs/1012.4850, 2010.

\bibitem{MR2345912}
R.~Ba{\~n}uelos and K.~Bogdan.
\newblock L\'evy processes and {F}ourier multipliers.
\newblock {\em J. Funct. Anal.}, 250(1):197--213, 2007.

\bibitem{2011RBKBAB}
R.~Ba{\~n}uelos, K.~Bogdan, and A.~Bielaszewski.
\newblock Fourier multipliers for non-symmetric {L}{\'e}vy processes.
\newblock In {\em {M}arcinkiewicz {C}entenary {V}olume}, volume~95, pages
  9--25. Banach Center Publications, marcinkiewicz centenary volume edition,
  2011.
\newblock to appear, http://arxiv.org/abs/1012.4902v2.

\bibitem{banuelos-mendez}
R.~Ba{\~n}uelos and P.~J. M{\'e}ndez-Hern{\'a}ndez.
\newblock Space-time {B}rownian motion and the {B}eurling-{A}hlfors transform.
\newblock {\em Indiana Univ. Math. J.}, 52(4):981--990, 2003.

\bibitem{MR1406564}
J.~Bertoin.
\newblock {\em L\'evy processes}, volume 121 of {\em Cambridge Tracts in
  Mathematics}.
\newblock Cambridge University Press, 1996.

\bibitem{MR2569321}
K.~Bogdan, T.~Byczkowski, T.~Kulczycki, M.~Ryznar, R.~Song, and
  Z.~Vondra{\v{c}}ek.
\newblock {\em Potential analysis of stable processes and its extensions},
  volume 1980 of {\em Lecture Notes in Mathematics}.
\newblock Springer-Verlag, Berlin, 2009.
\newblock Edited by Piotr Graczyk and Andrzej Stos.

\bibitem{MR745449}
C.~Dellacherie and P.-A. Meyer.
\newblock {\em Probabilities and potential. {B}}, volume~72 of {\em
  North-Holland Mathematics Studies}.
\newblock North-Holland Publishing Co., Amsterdam, 1982.
\newblock Theory of martingales, Translated from the French by J. P. Wilson.

\bibitem{MR688144}
M.~M{\'e}tivier.
\newblock {\em Semimartingales}, volume~2 of {\em de Gruyter Studies in
  Mathematics}.
\newblock Walter de Gruyter \& Co., Berlin, 1982.
\newblock A course on stochastic processes.

\bibitem{2010arXiv1008.3044M}
R.~{Mikulevicius} and H.~{Pragarauskas}.
\newblock {On $L^p$-estimates of some singular integrals related to jump
  processes}.
\newblock {\em ArXiv e-prints}, Aug. 2010.

\bibitem{MR2020294}
P.~E. Protter.
\newblock {\em Stochastic integration and differential equations}, volume~21 of
  {\em Applications of Mathematics (New York)}.
\newblock Springer-Verlag, Berlin, second edition, 2004.
\newblock Stochastic Modelling and Applied Probability.

\bibitem{MR1739520}
K.-i. Sato.
\newblock {\em L\'evy processes and infinitely divisible distributions},
  volume~68 of {\em Cambridge Studies in Advanced Mathematics}.
\newblock Cambridge University Press, Cambridge, 1999.
\newblock Translated from the 1990 Japanese original, Revised by the author.

\bibitem{MR0290095}
E.~M. Stein.
\newblock {\em Singular integrals and differentiability properties of
  functions}.
\newblock Princeton Mathematical Series, No. 30. Princeton University Press,
  Princeton, N.J., 1970.

\bibitem{NV}
A.~Volberg and F.~Nazarov.
\newblock Heat extension of the {B}eurling operator and estimates for its norm.
\newblock {\em Algebra i Analiz}, 15(4):142--158, 2003.

\bibitem{MR1334160}
G.~Wang.
\newblock Differential subordination and strong differential subordination for
  continuous-time martingales and related sharp inequalities.
\newblock {\em Ann. Probab.}, 23(2):522--551, 1995.

\end{thebibliography}

\end{document}